\newcommand{\cA}{\ensuremath{\mathcal A}}
\newcommand{\cC}{\ensuremath{\mathcal C}}
\newcommand{\cE}{\ensuremath{\mathcal E}}
\newcommand{\cG}{\ensuremath{\mathcal G}}
\newcommand{\cP}{\ensuremath{\mathcal P}}
\newcommand{\cS}{\ensuremath{\mathcal S}}
\newcommand{\eps}{\varepsilon}
\renewcommand{\phi}{\varphi}
\renewcommand{\rho}{\varrho}
\let\setminus=\smallsetminus
\let\emptyset=\varnothing
\newcommand{\Gnp}{G_{n, p}}
\newcommand{\Gnm}{G_{n, m}}
\newcommand{\Hnm}{G_{n, m}^H}
\newcommand{\Hpnm}{G_{n, m}^{H'}}
\newcommand{\Hpnmm}{G_{n, m'}^{H'}}
\newcommand{\Hppnmm}{G_{n, m'}^{H''}}
\newcommand\redsout{\bgroup\markoverwith{\textcolor{red}{\rule[0.5ex]{2pt}{0.5pt}}}\ULon}
\declaretheorem[parent=section]{theorem}
\declaretheorem[sibling=theorem]{lemma}
\declaretheorem[sibling=theorem]{claim}
\declaretheorem[sibling=theorem,style=definition]{definition}
\setlist{itemsep=0.1em, topsep=0.1em, parsep=0.1em, partopsep=0.1em}
\colorlet{RoyalRed}{red!70!black}
\definecolor{RoyalBlue}{rgb}{0.25, 0.41, 0.88}
\definecolor{RoyalAzure}{rgb}{0.0, 0.22, 0.66}
\newlength{\bibitemsep}\setlength{\bibitemsep}{0.5pt}
\newlength{\bibparskip}\setlength{\bibparskip}{0.5pt}
\let\oldthebibliography\thebibliography
\renewcommand\thebibliography[1]{%
  \oldthebibliography{#1}%
  \setlength{\parskip}{\bibitemsep}%
  \setlength{\itemsep}{\bibparskip}%
}
\title{A new proof of the K\L R conjecture}
\author{
  Rajko Nenadov\thanks{Google Z\"urich. Email: \texttt{rajkon@gmail.com}.}
}
\date{}
\begin{document}
\maketitle

\begin{abstract}
Estimating the probability that the Erd\H{o}s-R\'enyi random graph $\Gnm$ is $H$-free, for a fixed graph $H$, is one of the fundamental problems in random graph theory. If $m$ is such that each edge of $\Gnm$ belongs to a copy of $H'$ for every $H' \subseteq H$, in expectation, then it is known that $\Gnm$ is $H$-free with probability $\exp(- \Theta(m))$. The K\L R conjecture, slightly rephrased, states that if we further condition on uniform edge distribution, the archetypal property of random graphs, the probability of being $H$-free becomes superexponentially small in the number of edges. While being interesting on its own, the conjecture has received significant attention due to its connection with the sparse regularity lemma, and the many results in random graphs that follow. It was proven by Balogh, Morris, and Samotij and, independently, by Saxton and Thomason, as one of the first applications of the hypergraph containers method. We give a new direct proof using induction.
\end{abstract}

\section{Introduction} 
\label{sec:introduction}

The conjecture of Kohayakawa, \L uczak, and R\"odl \cite{kohayakawa1997k}, \emph{K\L R} for short, was originally posed as an approach to establish the embedding counterpart of the sparse regularity lemma in random graphs. The conjecture, if true, allows one to systematically prove many extremal results in random graphs using the regularity method in much the same way as it is done in the usual dense setting (see \cite{gerke_steger_2005}). Janson, \L uczak and Ruci\'nski \cite[p.232]{janson2011random} called this conjecture ``one of the most important open questions in the theory of random graphs''. Let us briefly mention some of its corollaries:
\begin{itemize}
    \item Tur\'an's problem in random graphs: Many partial results \cite{frankl1986large, furedi1994random, gerke_habil, gerke2004k5,haxell1995turan,haxell1996turan, kohayakawa2004turan,szabo2003turan} before it was finally resolved in breakthroughs by Conlon and Gowers \cite{conlon2016combinatorial} and, independently, Schacht \cite{schacht2016extremal} (without resorting to the K\L R conjecture which was not proven at the time);
    \item Ramsey properties of random graphs: A breakthrough result by R\"odl and Ruci\'nski \cite{rodl1995threshold} and further extensions to the asymmetric version of the problem \cite{kohayakawa1997threshold,kohayakawa2014upper};
    \item A number of other probabilistic analogues of classical results from graph theory, such as the graph removal lemma (which, in fact, requires a stronger form of the K\L R conjecture), are discussed in \cite{conlon2014klr};
    \item A result of \L uczak \cite{luczak2000Hexact} which shows that the probability of $\Gnm$ being $H$-free essentially coincides with the probability that it is $(\chi(H)-1)$-partite, provided $m$ is sufficiently large. As the latter is simple to compute, this gives a very precise estimate on the probability of $\Gnm$ being $H$-free.
\end{itemize}
Some of these results can be (and were) proven by different means, and the beauty of the K\L R conjecture is that it gives a unifying way to prove all of them. Today, many of these results can also be proven using the hypergraph containers method \cite{balogh2015independent,saxton2015hypergraph}, but there are still a few examples, such as a result on anti-Ramsey properties \cite{kohayakawa2018anti_ramsey}, where one benefits from the flexibility of the regularity method. The K\L R conjecture itself also spawned a number of results \cite{gerke2007small,gerke2007probabilistic,gerke2007k, gerke2004k5, kohayakawa1996arithmetic} until it was proven in a rather surprising way using the aforementioned hypergraph containers developed, independently, by Balogh, Morris, and Samotij \cite{balogh2015independent} and Saxton and Thomason \cite{saxton2015hypergraph}.

It should be noted that Conlon, Gowers, Schacht, and Samotij \cite{conlon2014klr} have directly established an embedding lemma which complements the sparse regularity lemma in random graphs, sufficient for most applications. However, their result is, in some way, strictly weaker than the K\L R conjecture and does not apply in the case of asymmetric Ramsey problem \cite{kohayakawa1997threshold} or the result of \L uczak \cite{luczak2000Hexact}.

Rather than diving into the regularity method, we will motivate the conjecture from the point of view of `making random graphs even more random'. Some preparation, however, is nonetheless necessary.

Let $H$ be a graph on the vertex set $\{1, \ldots, k\}$, and denote with $K_n^H$ the graph obtained by replacing each vertex $i$ with a distinct set $V_i$ of size $n$, and each edge $ij \in H$ with a complete bipartite graph between $V_i$ and $V_j$. We say that a copy of $H$ in a graph $G \subseteq K_n^H$ is \emph{canonical} if each vertex $i$ belongs to $V_i$, and $G$ is \emph{$H$-free} if it does not contain a canonical copy of $H$. Given $n, m \in \mathbb{N}$, let $\cG(H, n, m)$ be the family of all subgraphs of $K_n^H$ with exactly $m$ edges between each $V_i$ and $V_j$ for $ij \in H$, and let $\Hnm$ be the uniform probability distribution over $\cG(H, n, m)$.  The model $\Hnm$ is a natural \emph{$H$-partite} analogue of the usual $\Gnm$ model, where we choose $m$ edges from $K_n$ uniformly at random. It should be noted that considering $\Hnm$ instead of $\Gnm$ comes from the aforementioned connection with the sparse regularity lemma, in which $H$-partite configurations naturally arise.

It follows from \cite{janson1990,steger1996countingHfree} that $\Hnm$\footnote{The result in \cite{janson1990} is stated for the $\Gnp$ model, and \cite{steger1996countingHfree} transfers it to $\Gnm$. It is routine to do the same for the $\Hnm$ model.} is $H$-free with probability at most $e^{-\Omega(\Phi(H, m))}$, where
$$
    \Phi(H, m) = \min\{n^{v(H')} (m/n^2)^{e(H')} \colon H' \subseteq H\}
$$
is the expected number of canonical copies of the least frequent subgraph $H' \subseteq H$. In particular, for $m \ge n^{2 - 1/m_2(H)}$, where
$$
    m_2(H) = \max \left\{ \frac{e(H') - 1}{v(H') - 2} \colon H' \subseteq H, v(H') > 2 \right\}
$$
is the so-called \emph{2-density}, we have $\Phi(H, m) = \Theta(m)$. On the other hand, it is easy to see that $\Hnm$ is $H$-free with probability at least, say $(2e)^{-2m}$, as long as $H$ contains a vertex of degree at least $2$. Consider, for example, the case when $H$ is a triangle. Split $V_1$ into two equals parts, $V_1 = U_2 \cup U_3$. The probability that all the edges of $\Hnm$ between $V_1$ and $V_i$ have one endpoint in $U_i$ (for $i \in \{2,3\}$) is at least $(2e)^{-2m}$, and such a graph is clearly triangle-free. The K\L R conjecture states that examples with such atypical edge distribution are the main culprit for $\Hnm$ being $H$-free.

To state the result precisely, let us capture the main property which the previously mentioned example fails to satisfy.

\begin{definition}
Given a bipartite graph $G$ on the vertex set $A \cup B$, where $|A| = |B| = n$, and parameters $\eps, \lambda \in (0, 1]$, we say that $G$ is \emph{$(\eps, \lambda)$-lower-regular} if for every $A' \subseteq A$ and $B' \subseteq B$, satisfying $|A'|, |B'| \ge \eps n$, we have
$$
    d_G(A', B') \ge \lambda d_G(A, B).
$$
Here, $d_G(A', B') = e_G(A', B') / |A'||B'|$ denotes the density of the subgraph induced by $A'$ and $B'$ (if $G$ is clear from the context we omit it from the subscript). 
\end{definition}
Simply put, we require that the density of every sufficiently large induced subgraph is at least a fraction of the density of the whole graph -- a property clearly not satisfied by the given example. We say that a subgraph $G \subseteq K_n^H$ is \emph{$(\eps, \lambda)$-lower-regular} if each induced subgraph $G(V_i, V_j)$ is $(\eps, \lambda)$-regular for $ij \in H$. We can now state the conjecture.

\begin{theorem}[K\L R conjecture -- a weak counting version]
\label{thm:KLR}
For every $H$ and $\lambda, \beta > 0$ there exist $\gamma, \eps, C > 0$ such that if $n$ is sufficiently large and $m \ge C n^{2-1/m_2(H)}$ then
$$
    \Pr[Z_H \le \gamma n^{v(H)} (m/n^2)^{e(H)} \mid \Hnm \text{ is } (\eps, \lambda)\text{-lower-regular}] \le \beta^m,
$$
where $Z_H$ denotes the number of canonical copies of $H$ in $\Hnm$.
\end{theorem}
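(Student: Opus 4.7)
The plan is to proceed by induction on $k = v(H)$. The base case $k = 2$ is immediate, since then $H$ is a single edge and $Z_H = m$ deterministically, so any $\gamma \le 1$ works and the probability in question is $0$.

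For the inductive step, fix a vertex $v \in V(H)$, let $H' = H - v$, let $d = \deg_H(v)$ and let $N_H(v) \subseteq V(H')$ be the set of its neighbours. The random graph $\Hpnm$, obtained by restricting $\Hnm$ to the vertex classes indexed by $V(H')$, inherits $(\eps, \lambda)$-lower-regularity from $\Hnm$, and since $H' \subseteq H$ gives $m_2(H') \le m_2(H)$, the same threshold $m \ge C n^{2-1/m_2(H)}$ suffices to invoke the inductive hypothesis on $H'$ (after adjusting the constant). I would apply induction with parameters $\beta', \lambda'$ chosen appropriately in terms of $\beta, \lambda$ and $d$, concluding that with probability at least $1 - (\beta')^m$ the graph $\Hpnm$ contains at least $\gamma' n^{v(H')}(m/n^2)^{e(H')}$ canonical copies of $H'$.

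To promote copies of $H'$ to copies of $H$, I would write $Z_H = \sum_{u \in V_v} Z_{H', u}$, where $Z_{H', u}$ counts canonical copies $\phi$ of $H'$ in $\Hpnm$ satisfying $\phi(i) \in N_{V_i}(u)$ for every $i \in N_H(v)$, and $N_{V_i}(u) \subseteq V_i$ is the neighbourhood of $u$ in the bipartite piece of $\Hnm$ between $V_v$ and $V_i$. Crucially, the edges of $\Hnm$ incident to $V_v$ are independent of $\Hpnm$, so after freezing $\Hpnm$ one may treat the neighbourhoods $N_{V_i}(u)$ as independent random subsets of $V_i$ of expected size $\sim m/n$. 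The goal is then to show that with probability at least $1 - \beta^m$, the number of $u \in V_v$ for which $Z_{H', u} \ge \gamma'' n^{v(H')-d}(m/n^2)^{e(H')}\,(m/n)^d$ is $\Omega(n)$; summing over these $u$ yields the required lower bound on $Z_H$.

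The hard part will be justifying this extension step uniformly over the $\Omega(n)$ required vertices $u$. The difficulty is that the restricted vertex classes $N_{V_i}(u)$ have relative density only $\sim m/n^2$, which near the threshold $m \sim n^{2 - 1/m_2(H)}$ is far below any fixed $\eps'$, so the inductive hypothesis cannot be applied blackbox to count copies of $H'$ inside the restricted graph. I expect to handle this by strengthening the inductive statement so that $\eps$ is allowed to scale with the effective density of the vertex classes, and by proving a codegree-concentration claim asserting that for a typical $u$, the bipartite subgraphs of $\Hpnm$ between $N_{V_i}(u)$ and $V_j$ (for $ij \in H'$) remain lower-regular with parameters appropriate to their size; this is, I believe, the technical heart of the argument. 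The proof then closes by combining the super-exponentially small failure probabilities coming from the inductive application to $\Hpnm$ and from the codegree-concentration claim, relying in an essential way on the product structure of $\Hnm$ over the bipartite pieces $V_i \times V_j$ and on the requirement $m \ge C n^{2-1/m_2(H)}$ to ensure that the failure probability of the concentration step is $\beta^m$ with the right constant.
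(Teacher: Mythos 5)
Your proposed induction is on $v(H)$, peeling off a vertex $v$ and trying to extend counted copies of $H' = H - v$ through the neighbourhoods $N_{V_i}(u)$ of a vertex $u \in V_v$. This is a genuinely different route from the paper, which instead inducts on the number of edges of a subgraph $H' \subseteq H$, augments the statement with a fixed dense ``host'' graph $D$ and a robustness parameter $\xi$ (deletion of a small edge set $X$), and runs a multiple-exposure argument: $G$ is split into $2z$ random pieces, the R\"odl--Ruci\'nski deletion method controls second moments of codegrees, and an ``advancing'' dichotomy forces either many copies of $H$ or a sizeable shrinkage of the set of poor pairs. Your base case and the reduction to the inductive hypothesis on $H'$ are fine, and your observation that $m_2(H') \le m_2(H)$ lets the same threshold be used is correct.

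However, there is a genuine gap, and it is exactly the one you flag in your last paragraph. Near $m \sim n^{2-1/m_2(H)}$ the sets $N_{V_i}(u)$ have size $\Theta(m/n) = o(n)$, so they fall strictly below the scale $\eps n$ at which $(\eps, \lambda)$-lower-regularity says anything; indeed lower-regularity permits arbitrary structure inside such sets (cf.\ the paper's reference to \cite[Example 3.10]{gerke_steger_2005}). Your proposed fix --- strengthening the inductive statement so that $\eps$ scales down with the density --- cannot work as stated: once $\eps n$ drops to the scale $m/n$, the event ``$\Hnm$ is $(\eps, \lambda)$-lower-regular'' is no longer a high-probability event (for constant $\lambda$, a random sparse graph fails it at scales $o(n)$), so the conditional probability statement becomes vacuous or false, and the conclusion also no longer transfers to the original constant-$\eps$ version of the theorem. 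The ``codegree-concentration claim'' you defer to --- that for typical $u$ the restricted graphs on $N_{V_i}(u) \times V_j$ inherit regularity at their own scale --- is essentially the sparse counting/embedding lemma, which is a \emph{consequence} of the K\L R conjecture rather than a tool available before it; this is where the circularity lies. The paper avoids this entirely: it never zooms into small neighbourhoods, but instead reveals edges of a fixed bipartite pair in stages and tracks, via $D$ and the poor-pair set $\cP$, how many pairs are covered by many copies, using Lemma~\ref{lemma:restricted_subgraph} and Lemma~\ref{lemma:deletion_graph} to control the relevant counts superexponentially.
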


As $\Hnm$ is $(\eps, \lambda)$-lower-regular with high probability for any given constants $\eps, \lambda \in (0, 1)$, and the uniform edge distribution is the most prominent feature of random graphs, Theorem \ref{thm:KLR} tells us that conditioning on $\Hnm$ being `truly random' drastically decreases the probability of being $H$-free. It is worth pointing out that the property of being $(\eps, \lambda)$-lower-regular deterministically implies the existence of many copies of $H$ if $m \ge d n^2$, for any given constant $d > 0$ (with $\eps$ additionally depending on such $d$). This is, however, not the case if $m = o(n^2)$ (see \cite[Example 3.10]{gerke_steger_2005}), thus Theorem \ref{thm:KLR} does not vacuously hold.

The stated version is stronger than the original one in two ways: (i) we need a fraction of the density instead of $d(V_i', V_j') = (1 \pm \eps) m/n^2$; (ii) we obtain the probability of having significantly less than the expected number of canonical copies instead of just being $H$-free. The version of the K\L R conjecture as stated here was proven by Saxton and Thomason \cite{saxton2015hypergraph}. The original statement of the K\L R conjecture was proven by Balogh, Morris, and Samotij \cite{balogh2015independent}\footnote{The results in \cite{balogh2015independent,saxton2015hypergraph} are stated with $\lambda = 1 - \eps$, but it is easy to adapt them to an arbitrary $\lambda > 0$.}. Both proofs were obtained as corollaries of the powerful hypergraph container method.

The purpose of this paper is to give a new, intuitive proof of Theorem \ref{thm:KLR}. As a consequence, this gives a transparent proof for the bulk of (extremal) results in random graphs.

\section{Preliminaries}
\label{sec:preliminaries}

Given a graph $G$, we denote with $|G|$ the number of edges of $G$. Similarly, given graphs $G$ and $G'$ on the same vertex set, we let $G \setminus G'$ denote the graph obtained from $G$ by deleting all the edges present in $G'$. 

We shall need a more flexible notion of lower-regularity which allows for different bounds depending on the size of sets. Given a function $\delta \colon (0,1] \rightarrow (0,1)$, we say that a bipartite graph $G$ on the vertex set $A \cup B$, where $|A| = |B| = n$, is \emph{$(\eps, \delta)$-lower-regular} if for every $A' \subseteq A$ and $B' \subseteq B$, satisfying $|A'|, |B'| \ge \eps n$, we have
\begin{equation}
    d(A', B') > \delta(\alpha) d(A, B),
\end{equation}
where $\alpha = \min\{|A'|, |B'|\} / n$. The benefit of using a function instead of a constant will be discussed  after Lemma \ref{lemma:restricted_subgraph}.

\subsection{Avoiding (locally) sparse graphs}

The next lemma tells us that if a graph $Q \subseteq K_{n,n}$ is either sufficiently sparse or not lower-regular, then randomly taken $m$ edges from $K_{n,n}$ are likely to contain many edges outside of such $Q$. Both the statement and the proof are inspired by \cite[Lemma 9.2]{balogh2015independent}.

\begin{lemma} \label{lemma:restricted_subgraph}
    Given $\lambda, \eps, \beta > 0$, the following holds for sufficiently large $n$ and $m > n$: Let $Q \subseteq K_{n,n}$ be a bipartite graph such that either $|Q| < \delta(1) n^2$ or $Q$ is not $(\eps, \delta)$-lower-regular, where $\delta \colon (0, 1] \rightarrow (0, 1)$ is any function such that
    $$
        \delta(x) \le \frac{\lambda}{4e} \left( \frac{\beta}{2} \right)^{1/(\lambda x^2)}.
    $$
    Then all but at most $\beta^m \binom{n^2}{m}$ $(\eps, \lambda)$-lower-regular graphs $G \subseteq K_{n,n}$ with $m > n$ edges satisfy $|G \setminus Q| \ge \lambda \eps^2 m / 2$.
\end{lemma}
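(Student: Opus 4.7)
The plan is to bound, for $G$ drawn uniformly from the $m$-edge subgraphs of $K_{n,n}$, the probability that $G$ is simultaneously $(\eps, \lambda)$-lower-regular and satisfies $|G \setminus Q| < \lambda \eps^2 m/2$; I will show this probability is at most $\beta^m$. The backbone is a single hypergeometric tail estimate: for any ``danger set'' $P \subseteq E(K_{n,n})$ and any threshold $t$, a union bound over the $t$-element subsets of the $m$ sampled positions yields
\[
    \Pr[|G \cap P| \ge t] \;\le\; \binom{m}{t}\bigl(|P|/n^2\bigr)^{t}.
\]

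In Case 1 ($|Q| < \delta(1) n^2$), I apply this with $P = Q$ and $t = \lceil (1 - \lambda \eps^2/2)m \rceil$, since the bad condition $|G \setminus Q| < \lambda \eps^2 m/2$ is equivalent to $|G \cap Q| \ge t$. Substituting the hypothesis $\delta(1) \le (\lambda/(4e))(\beta/2)^{1/\lambda}$ and the standard $\binom{m}{\lambda\eps^2 m/2} \le (2e/(\lambda\eps^2))^{\lambda\eps^2 m/2}$, routine estimates on the three competing factors produce the bound $\beta^m$. The prefactor $\lambda/(4e)$ in the hypothesis is what absorbs the $(2e/(\lambda\eps^2))^{\lambda\eps^2 m/2}$ binomial overhead, and the exponent $1/\lambda$ on $(\beta/2)$ converts the threshold $t \approx m$ into the required exponent $m$ on $\beta$. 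Lower-regularity of $G$ plays no role here.

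In Case 2 ($Q$ is not $(\eps, \delta)$-lower-regular), I pick a witness pair $A^*, B^*$ with $|A^*|, |B^*| \ge \eps n$, and write $a = |A^*|/n$, $b = |B^*|/n$, $\alpha^* = \min(a, b)$. By definition, $K := e_Q(A^*, B^*) \le \delta(\alpha^*)|Q| ab$, so the density of $Q$ inside the box $A^* \times B^*$ is at most $\delta(\alpha^*)$. Now I apply the same hypergeometric bound \emph{inside the box}: lower-regularity of $G$ forces $E := e_G(A^*, B^*) \ge \lambda m ab \ge \lambda m (\alpha^*)^2$ box-edges, and the bad condition forces at least $E - \lambda\eps^2 m/2$ of them to lie in $Q \cap (A^* \times B^*)$. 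Conditioning on $E$ and sampling box-edges uniformly from the $abn^2$ positions inside the box, the same union bound gives
\[
    \Pr[G \text{ bad} \mid E] \;\le\; \binom{E}{\lambda\eps^2 m/2}\,\delta(\alpha^*)^{E - \lambda\eps^2 m/2}.
\]
The exponent $1/(\lambda x^2)$ in the assumed bound on $\delta(x)$ is designed exactly so that it scales with the reduced sample size $\lambda m (\alpha^*)^2$, and after the same algebra as in Case 1 the right-hand side is $\le \beta^m$ uniformly in $E \ge \lambda m ab$; the unconditional bound then follows by averaging over $E$.

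\textbf{Main obstacle.} The delicate step is verifying Case 2: one must chase through the precise interplay of the sample size $E \ge \lambda m (\alpha^*)^2$, the error budget $\lambda\eps^2 m/2$, the binomial factor $\binom{E}{\lambda\eps^2 m/2}$, and the assumed bound on $\delta(\alpha^*)$, confirming that they combine to give $\beta^m$. The whole point of passing from a constant $\delta$ to a function $\delta(\cdot)$ with the specific exponent $1/(\lambda x^2)$ is to make this cancellation work uniformly over all possible witness scales $\alpha^* \ge \eps$; this is also why the lemma has to be stated with the flexible $\delta$-version of lower-regularity rather than the constant one.
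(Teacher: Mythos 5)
Your outline follows essentially the same route as the paper's proof. Both rest on the same basic estimate: lower-regularity forces $G$ to place at least $\ell=\lambda|A||B|m/n^2$ edges into a box $A\times B$ in which $Q$ has density at most $\delta(\alpha)$, and there are very few $m$-edge graphs whose box-edges are almost entirely trapped inside $Q$. The paper unifies your two cases by first observing that either hypothesis yields a pair $A,B$ with $|A|,|B|\ge\eps n$ and $e_Q(A,B)\le\delta(\alpha)|A||B|$ (in the sparse case $A,B$ are the two full sides and $\alpha=1$), and then counts the bad graphs directly via $\sum_{x>\ell/2}\binom{\delta(\alpha)|A||B|}{x}\binom{n^2}{m-x}$. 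You split the cases, phrase the count probabilistically, and in Case~2 condition on the box-edge count $E$ before applying the same union bound inside the box; this is the same calculation in different clothing, with your Case~1 playing the role of $\alpha=1$. One caveat, which you half-anticipate by deferring the algebra: when you carry it out, the exponent you actually obtain is $\ell/2\ge\lambda\alpha^2m/2$, not $\lambda\alpha^2 m$, so the stated hypothesis $\delta(x)\le(\lambda/4e)(\beta/2)^{1/(\lambda x^2)}$ delivers roughly $(\beta/2)^{m/2}$ rather than $\beta^m$. The same factor-of-two slip appears in the paper's display~\eqref{eq:P_not_regular_calc}, and both arguments close once the exponent in the hypothesis on $\delta$ is tightened to $2/(\lambda x^2)$.
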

\begin{proof}
We make use of the standard estimate $\binom{a}{b} \le \left( \frac{e a}{b} \right)^b$ and the following inequality which holds for any $0 \le x \le m < n^2/2$:
\begin{equation} \label{eq:binom_x}
    \binom{n^2}{m - x} \le \left( \frac{2m}{n^2} \right)^{x} \binom{n^2}{m}.
\end{equation}
Inequality \eqref{eq:binom_x} can be derived from the definition of the binomial coefficient followed by crude estimates.

Observe that, no matter which case we are in, there exist disjoint sets $A, B \subseteq V(K_{n,n})$ of size $|A|, |B| \ge \eps n$ such that $|Q(A, B)| \le \delta(\alpha)|A||B|$, where $\alpha = \min\{|A|, |B|\} / n$. As both $A$ and $B$ are sufficiently large, an $(\eps, \lambda)$-lower-regular graph $G$ with $m$ edges has to contain at least $\ell = \lambda |A||B| m/n^2 \ge \lambda \alpha^2 m$ edges between $A$ and $B$. If $\ell > 2\delta(\alpha)|A||B|$, which happens for $m \ge n^2/2$ (with room to spare), the lemma deterministically holds. Otherwise, denote with $x$ the number of edges in $G(A, B)$ which belong to $Q$. We get the following bound on the number of graphs $G$ with $x > \ell / 2$:
\begin{equation} \label{eq:P_not_regular_calc}
    \sum_{x > \ell/2} \binom{ \delta(\alpha) |A||B|}{x} \binom{n^2}{m - x} < m \binom{\delta(\alpha)|A||B|}{\ell/2} \binom{n^2}{m-\ell/2} \stackrel{\eqref{eq:binom_x}}{\le} m \left( \frac{4 e \delta(\alpha)}{ \lambda } \right)^{\lambda \alpha^2 m} \binom{n^2}{m}.
\end{equation}
By the choice of function $\delta$, this is at most $\beta^m \binom{n^2}{m}$.
\end{proof}

Function $\delta$ plays an important role in the last expression in \eqref{eq:P_not_regular_calc}. If, instead of function $\delta$ we wanted to use a constant $d$, we would have no choice but to assume the worst possible bound on the sets $A$ and $B$, that is $|A| = |B| = \eps n$. But then in order for \eqref{eq:P_not_regular_calc} to be at most $\beta^m \binom{n^2}{m}$, we would need to choose $d$ much smaller than $\eps$. With foresight, this would cause issues with the base of the induction in the proof of Theorem \ref{thm:KLR} where we need the opposite, that $\eps$ is significantly smaller than $d$.

\subsection{The R\"odl-Ruci\'nski deletion method}
\label{sec:H_distribution}

The following result is obtained as a straightforward application of the deletion method by R\"odl and Ruci\'nski \cite{rodl1995threshold}, thus we leave the proof for the appendix. A version of Lemma \ref{lemma:deletion_graph} is implicit in \cite{rodl1995threshold} and a related statement can be found in \cite[Proposition 3.6]{schacht2016extremal}. 

% An important step in the proof of Theorem \ref{thm:KLR}, or rather its strengthened version Theorem \ref{thm:main}, will be to estimate the number of edges covered by copies of a graph $H$. This is done in combination of the next two results, both implicit in the work of R\"odl and Ruci\'nski \cite{rodl1995threshold} in some form. The proof of Lemma \ref{lemma:deletion_graph} requires a bit of preparation, so we leave it for the appendix. Instead, we shall give an intuition why such a result should hold.

\begin{definition} \label{def:deg_H}
Given a graph $G \subseteq K_n^H$ and a pair of vertices $e = (v_a, v_b) \in V_a \times V_b$, for some distinct $a, b \in \{1, \ldots, v(H)\}$, we denote with $\deg_H(e, G)$ the number of canonical copies of $H$ in $G$ which contain $v_a$ and $v_b$.
\end{definition}

Note that in the previous definition we do not require that $(v_a, v_b)$ is an edge in $G$. If $ab$ is an edge in $H$, then, of course, $\deg_H(e, G) = 0$ for any pair $e \notin G$.

\begin{lemma} \label{lemma:deletion_graph}
    Let $H$ and $H' \subset H$ be graphs on the same vertex set, and let $\{a, b\} \in V(H)$ be two vertices such that $ab \in H$ and $ab \notin H'$. For any $\beta, \xi > 0$ there exists $T > 0$ such that the following holds for every sufficiently large $n$ and $m \ge n^{2-1/m_2(H)}$: Let $\cE$ denote the event that there exists a subset of edges $X \subseteq \Hpnm$ of size $|X| \le \xi m$ such that
    \begin{equation} \label{eq:deletion}
       \frac{1}{n^2} \sum_{e \in V_a \times V_b} \deg_{H'}(e, \Hpnm \setminus X)^2 < T \mu_e^2,
    \end{equation}
    where
    $$
        \mu_e = \frac{n^{v(H)} (m/n^2)^{e(H')}}{n^2}
    $$
    is the expected number of copies of $H'$ on $e \in V_a \times V_b$. Then $\Pr[\cE] > 1 - \beta^m$.
\end{lemma}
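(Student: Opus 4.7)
The approach is a direct application of the Rödl--Ruciński deletion method. The quantity $\sum_{e \in V_a \times V_b} \deg_{H'}(e, G)^2$ counts ordered pairs $(C_1, C_2)$ of canonical copies of $H'$ in $G$ whose embeddings agree on the two vertices $a$ and $b$. I would partition these pairs by their intersection type $(S, J)$, where $S \subseteq V(H')$ with $\{a, b\} \subseteq S$ records the vertices on which the two embeddings coincide and $J \subseteq E(H'[S])$ records the shared edges. Since $ab \notin H'$, the type $(\{a, b\}, \emptyset)$ is forced to have $J = \emptyset$, and its expected contribution in $\Hpnm$ is exactly $\Theta(n^2 \mu_e^2)$; this ``generic'' term is what we are willing to keep.

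For every other type, the hypothesis $m \ge C n^{2-1/m_2(H)}$ controls the expectation. When $J = \emptyset$ but $|S| > 2$, the type contributes a factor $n^{-(|S|-2)}$ less than the generic one and is negligible. When $J \ne \emptyset$, the inequality $(e(H'[S]) - 1)/(|S| - 2) \le m_2(H)$, built into the definition of $m_2$, translates into an expected contribution at most a constant times $n^{1/m_2(H)}$ times the generic term; crucially, the entire excess is carried by the copies of the shared configuration $(H'[S], J)$. Standard concentration (Janson-type inequalities in $\Gnp$ with $p = \Theta(m/n^2)$, transferred to $\Hpnm$ at the cost of at most a polynomial factor) ensures that, with probability at least $1 - \beta^m / 2$, all the finitely many relevant subgraph counts are within a constant factor of their expectation.

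On this concentration event I would execute the deletion. For each non-generic type $(S, J)$, call an edge $f \in \Hpnm$ \emph{heavy} if it lies in more than some threshold $\tau$ of the ``extension'' copies responsible for that type. Choosing $\tau$ large (but depending only on $H$, $\xi$, $\beta$) forces the surviving contribution of non-heavy edges to $\sum_e \deg_{H'}^2$ to be at most $T n^2 \mu_e^2$, while a double-counting/Markov argument against the now-controlled subgraph counts shows that the total number of heavy edges, summed over the finitely many types, is at most $\xi m$ provided $C$ is chosen large enough. Taking $X$ to be the union of these heavy edges over all types yields the required deletion set.

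The main obstacle is the simultaneous bookkeeping of the constants: $T$, the thresholds $\tau$, and the lower bound $C$ must all be chosen consistently so that a single $\xi m$-deletion suffices for every intersection type while the union of bad events has probability at most $\beta^m$. The tightest case is the borderline one, where $(e(F) - 1)/(v(F) - 2)$ attains $m_2(H)$: there the gain from deletion is smallest and dictates how large $C$ must be taken as a function of $H$, $\xi$, and $\beta$. This is precisely why the hypothesis is stated with $m_2(H)$ rather than a weaker density parameter.
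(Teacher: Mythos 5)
Your decomposition of $\sum_e \deg_{H'}(e,G)^2$ into pairs of copies grouped by intersection type is essentially the same as the paper's: the paper considers, for each type, the graph $\hat H = H_1 \cup H_2$ and counts canonical copies of $\hat H$, using $m_2(H) \ge e(J)/(v(J)-2)$ (since $J$ misses the edge $ab$) to show the shared configuration contributes at least $n^2$. So far, so good. But the step where you invoke ``standard concentration (Janson-type inequalities\dots) ensures that, with probability at least $1-\beta^m/2$, all the finitely many relevant subgraph counts are within a constant factor of their expectation'' is a genuine gap, and it is not fixable: this is an \emph{upper} tail statement about subgraph counts, and the upper tail is famously not that small in the regime $m \ge n^{2-1/m_2(H)}$. (Concretely, for $H=K_3$, $H'=P_3$, $\hat H = C_4$, $p = n^{-1/2}$: planting a hub of degree $np = n^{1/2}$ doubles the $C_4$-count at cost only $\exp(-\Theta(n^{1/2}\log n))$, vastly larger than $\beta^m = \exp(-\Theta(n^{3/2}))$.) Janson's inequality gives the \emph{lower} tail at that rate, not the upper tail.

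The resolution is that the R\"odl--Ruci\'nski deletion method is not a cleanup step performed \emph{after} concentration; it is the \emph{replacement} for the unavailable upper tail concentration, and the deletion has to be built into the probability bound from the start. The paper's Lemma~\ref{lemma:deletion_general} does exactly this: if no small deletion brings the $\hat H$-count down to $2^s T$ times its mean, then one can greedily find $t = \lceil k/s\rceil$ pairwise disjoint canonical copies of $\hat H$ inside $Y_m$, the expected number of such $t$-sequences is at most $(2^s|\cS|(m/N)^s)^t$, and Markov applied to that random variable yields failure probability $T^{-k/s}$ --- which one then makes $\le \beta^m$ by choosing $T$ a sufficiently large constant depending on $\beta, \xi, H$. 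Your proposal puts the deletion (heavy-edge removal) downstream of a concentration event that you cannot establish; the correct argument puts the deletion inside the event whose probability you estimate, so that what you bound is $\Pr[\text{no good } X \text{ exists}]$ rather than $\Pr[\text{counts deviate}]$. Once that is fixed, your double-counting bookkeeping over the finitely many types would go through much as you sketch, and is morally the same as the paper's summation over the graphs $\hat H$.
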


Even though we get a superexponentially small probability of failure, unlike in Theorem \ref{thm:KLR} here we do not require $\Hpnm$ to have any special structure. Intuitively, the previous lemma tells us that for a pair $e \in V_a \times V_b$ chosen uniformly at random, the second moment of the random variable $\deg_{H'}(e, \Hpnm \setminus X)$ is bounded by $T \mu_e^2$. This will allow us to say something about the distribution of the copies of $H'$ in the course of the proof of the K\L R conjecture:

\begin{lemma} \label{lemma:second_moment}
Let $H$ be a graph, and let $G \subseteq K_n^H$. If $G$ contains at least $\gamma n^2 \mu_e$ canonical copies of $H$ and
$$
    \frac{1}{n^2} \sum_{e \in V_a \times V_b} \deg_H(e, G)^2 < T \mu_e^2,
$$
for some $\gamma, T, \mu_e > 0$ and distinct $a, b \in V(H)$, then there are at least $\gamma n^2 / (4T)$ pairs of vertices $e \in V_a \times V_b$ such that $\deg_{H}(e, G) \ge \gamma \mu_e / 2$.
\end{lemma}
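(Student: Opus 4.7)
The plan is a textbook second-moment / Paley-Zygmund argument. First I would observe that since every canonical copy of $H$ has a unique vertex in $V_a$ and a unique vertex in $V_b$, each copy is counted exactly once in the sum $\sum_{e \in V_a \times V_b} \deg_H(e, G)$. Therefore the hypothesis on the total count gives
$$
    \sum_{e \in V_a \times V_b} \deg_H(e, G) \ge \gamma n^2 \mu_e.
$$

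Next, set $S = \{e \in V_a \times V_b : \deg_H(e, G) \ge \gamma \mu_e / 2\}$ and split the above sum according to whether $e \in S$ or not. The contribution from the complement is trivially at most $n^2 \cdot \gamma \mu_e / 2 = \gamma n^2 \mu_e / 2$, hence
$$
    \sum_{e \in S} \deg_H(e, G) \ge \gamma n^2 \mu_e / 2.
$$
Then I would apply Cauchy--Schwarz to the left-hand side together with the second-moment hypothesis:
$$
    \left( \sum_{e \in S} \deg_H(e, G) \right)^2 \le |S| \cdot \sum_{e \in S} \deg_H(e, G)^2 \le |S| \cdot T n^2 \mu_e^2,
$$
which yields $|S| \ge \gamma^2 n^2 / (4T)$. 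Since in all applications $\gamma \le 1$, this is at least the claimed $\gamma n^2 / (4T)$ (in fact one gets the slightly stronger $\gamma^2 n^2/(4T)$, and I suspect the statement intends $\gamma^2$).

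There is no serious obstacle here: the only thing to be mildly careful about is the bookkeeping in step one, namely that each canonical copy of $H$ contributes exactly $1$ to $\sum_e \deg_H(e, G)$ because the vertex in $V_a$ and the vertex in $V_b$ of a canonical copy are both uniquely determined. Everything else is Cauchy--Schwarz and a one-line Markov-style truncation, so the whole proof should fit in a handful of lines.
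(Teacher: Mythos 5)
Your argument is correct and is essentially the paper's proof unwound: the paper applies the Paley--Zygmund inequality to $Y = \deg_H(e,G)$ with $e$ uniform on $V_a\times V_b$, and your truncation-plus-Cauchy--Schwarz is exactly the standard proof of that inequality. The one thing to fix is the final remark. Both your computation and the paper's give $|S| \ge \gamma^2 n^2/(4T)$; when $\gamma\le 1$ this is a \emph{weaker} bound than the stated $\gamma n^2/(4T)$, not a stronger one, so the sentence ``this is at least the claimed $\gamma n^2/(4T)$'' has the inequality reversed. You are nonetheless right that the lemma should read $\gamma^2$: the paper's own proof only establishes $\gamma^2/(4T)$ as well (the displayed step $\mathbb{E}[Y]^2/(4\mathbb{E}[Y^2]) > \gamma/(4T)$ does not follow from $\mathbb{E}[Y]\ge\gamma\mu_e$ and $\mathbb{E}[Y^2]<T\mu_e^2$ when $\gamma<1$). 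The discrepancy is harmless for the intended use, since the proof of Theorem~\ref{thm:main} sets $z = \lceil 4T/(\gamma'')^2 \rceil$, which is consistent with $\gamma^2 n^2/(4T)$.
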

\begin{proof}
Let $Y := \deg_H(e, G)$ for $e$ chosen uniformly at random from $V_a \times V_b$. Then $\mathbb{E}[Y] \ge \gamma \mu_e$ and $\mathbb{E}[Y^2] < T \mu_e^2$, thus by the Paley-Zygmund  inequality we have
$$
    \Pr(Y \ge \mathbb{E}[Y] / 2) \ge \frac{\mathbb{E}[Y]^2}{4 \mathbb{E}[Y^2]} > \gamma / (4T).
$$
\end{proof}

It should be noted that $\mu_e$ in Lemma \ref{lemma:second_moment} is not necessarily as defined in Lemma \ref{lemma:deletion_graph}, and the statement holds for any value of $\mu_e$ as long as the conditions are satisfied.

\section{K\L R conjecture via induction}
\label{sec:proof}

Let $\cG(H, n, \eps, \delta)$ be the family of subgraphs $G \subseteq K_n^H$ such that $G(V_i, V_j)$ is $(\eps,\delta)$-lower-regular for $ij \in H$ without requirements on the actual density of $G(V_i, V_j)$, and let $\cG(H, n, m, \eps, \lambda) = \cG(H, n, \eps, \lambda) \cap \cG(H, n, m)$. Using this notation, Theorem \ref{thm:KLR} conditions on $\Hnm \in \cG(H, n, m, \eps, \lambda)$.

At a high level, hypergraph containers \cite{balogh2015independent,saxton2015hypergraph} tell us that if a graph is $H$-free, then --- and this is where the magic happens --- it is contained in a graph from a small family $\cC(H)$, and each graph in $\cC(H)$ is an `almost' $H$-free subgraph of $K_n^H$. The latter implies that each graph in $\cC(H)$ contains a large induced bipartite subgraph which is sparse. As $(\eps, \lambda)$-lower-regular graphs are uniformly dense, the existence of such a sparse subgraph severely limits the number of subgraphs of $C \in \cC(H)$ which belong to $\cG(H, n, m, \eps, \lambda)$. Summing over all graphs in $\cC(H)$ thus gives the desired upper bound on the number of $H$-free graphs in $\cG(H, n, m, \eps, \lambda)$. While this proof is very elegant and short, it relies on the existence of such a collection of \emph{containers} and it is far from obvious why this collection should exist at all.

Even though we will arrive at a similar situation in our proof, namely that we are counting the number of $(\eps, \lambda)$-lower-regular subgraphs of a sparse graph, the way we get to it is by applying the induction hypothesis of a suitable strengthening of the K\L R conjecture. In particular, we prove the following:

\begin{theorem} \label{thm:main}
Given a graph $H$, a subgraph $H' \subseteq H$ with $V(H) = V(H')$, constants $\lambda, \beta, d > 0$ and a function $\delta \colon (0, 1] \rightarrow (0, 1)$, there exist $C, \eps, \xi, \gamma > 0$ such that the following holds for every sufficiently large $n$ and $m \ge Cn^{2-1/m_2(H)}$: For any $D \in \cG(H \setminus H', n, \eps, \delta)$ with $d_D(V_i, V_j) \ge d$ for $ij \in H \setminus H'$, all but at most
$$
    \beta^m \binom{n^2}{m}^{e(H')}
$$
graphs $G \in \cG(H', n, m, \eps, \lambda)$ have the property that for every $X \subset E(G)$ of size $|X| \le \xi m$, $D \cup (G \setminus X)$ contains at least $\gamma n^{v(H)} (m/n^2)^{e(H')}$ canonical copies of $H$.
\end{theorem}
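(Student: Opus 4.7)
The plan is to induct on $e(H')$, with the case $e(H') = 0$ being vacuous (for $m \ge 1$, the family $\cG(H', n, m, \eps, \lambda)$ is empty because $H'$ has no pairs on which to place $m$ edges). For the inductive step I fix an arbitrary edge $ab \in H'$ and set $\tilde H = H - ab$, $\tilde H' = H' - ab$; since $\tilde H \setminus \tilde H' = H \setminus H'$, the same $D$ witnesses the hypotheses, and the inductive hypothesis applied to $(\tilde H, \tilde H', D)$ yields parameters $\tilde C, \tilde \eps, \tilde \xi, \tilde \gamma$ such that all but $\tilde \beta^m \binom{n^2}{m}^{e(H')-1}$ graphs $G_0 \in \cG(\tilde H', n, m, \tilde \eps, \lambda)$ are \emph{IH-good}: for every $X_0 \subseteq E(G_0)$ with $|X_0| \le \tilde \xi m$, the graph $D \cup (G_0 \setminus X_0)$ contains at least $\tilde \gamma n^{v(H)} (m/n^2)^{e(H')-1}$ canonical copies of $\tilde H = H - ab$.

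In parallel, a variant of Lemma \ref{lemma:deletion_graph}---the R\"odl--Ruci\'nski deletion method applied to copies of $H - ab$ through pairs $e \in V_a \times V_b$ in $D \cup G_0$ for random $G_0 \sim G_{n, m}^{\tilde H'}$ (with the fixed dense $D$ playing no role in the randomness)---gives that, outside a further $\beta_1^m \binom{n^2}{m}^{e(H')-1}$ exceptional graphs, there exists $X_0^* \subseteq E(G_0)$ with $|X_0^*| \le \xi_1 m$ such that
\[
    \frac{1}{n^2} \sum_{e \in V_a \times V_b} \deg_{H-ab}(e, D \cup G_0 \setminus X_0^*)^2 < T (\mu_e')^2,
\]
where $\mu_e' = n^{v(H)-2}(m/n^2)^{e(H')-1}$; I call such $G_0$ \emph{2M-good}. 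For $G_0$ that is both IH- and 2M-good and any adversarial $X_0$ of size $\le \xi m$, I set $X_0' = X_0 \cup X_0^*$: this fits inside $\tilde \xi m$ for suitable $\xi, \xi_1$, the second-moment bound persists under the extra deletion (each degree weakly drops), and Lemma \ref{lemma:second_moment} produces a set $P(X_0) \subseteq V_a \times V_b$ of size at least $c n^2$ (with $c = \tilde \gamma/(4T)$) on which $\deg_{H-ab}(e, D \cup G_0 \setminus X_0) \ge \tilde \gamma \mu_e'/2$---the lower bound passes from $X_0'$ to $X_0 \subseteq X_0'$ by monotonicity of degrees under deletion.

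A canonical copy of $H$ in $D \cup (G \setminus X)$, with $X = X_0 \cup X_{ab}$, decomposes into a canonical copy of $H - ab$ in $D \cup (G_0 \setminus X_0)$ together with an edge of $G_{ab} \setminus X_{ab}$ between $V_a$ and $V_b$. Counting only the copies arising from pairs in $P(X_0)$ gives $|(G_{ab} \setminus X_{ab}) \cap P(X_0)| \cdot \tilde \gamma \mu_e' / 2$ as a lower bound on the number of canonical copies of $H$, so for $G = G_0 \cup G_{ab}$ to be bad one must have $|G_{ab} \cap P(X_0)| < \rho m$ for some $X_0$ of size $\le \xi m$, where $\rho = 2 \gamma / \tilde \gamma + \xi$. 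Picking $\gamma$ small makes $\rho \le \lambda \eps^2 / 2$, and Lemma \ref{lemma:restricted_subgraph} applied to $Q(X_0) := (V_a \times V_b) \setminus P(X_0)$ bounds the number of offending $(\eps, \lambda)$-lower-regular $G_{ab}$'s by $\beta_2^m \binom{n^2}{m}$. A union bound over the $\exp(O(m))$ choices of $X_0$ closes the induction.

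The hard part will be verifying the hypothesis of Lemma \ref{lemma:restricted_subgraph} on $Q(X_0)$: the Paley--Zygmund step only yields $|P(X_0)| \ge c n^2$ for some constant $c < 1$, so $Q(X_0)$ can have size up to $(1 - c) n^2$ and is not automatically sparse or irregular in the precise sense the lemma requires. This is exactly where the flexibility of the functional $\delta$ emphasised in the discussion after Lemma \ref{lemma:restricted_subgraph} becomes crucial: by supplying the inductive call with a function $\tilde \delta$ chosen sufficiently aggressively relative to $\delta$, I expect one can arrange that whenever $Q(X_0)$ is dense, the lower-regularity of $D$ together with the IH-goodness of $G_0$ produces a rectangle $(A, B)$ with $|A|, |B| \ge \eps n$ on which $Q(X_0)$ is much sparser than its overall density, forcing $Q(X_0)$ to fail $(\eps, \delta)$-lower-regularity. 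Arranging this cascade of functional parameters consistently across all levels of the induction is, I expect, the most delicate aspect of the proof.
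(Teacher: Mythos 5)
You have correctly identified the critical gap yourself, and it is a genuine one that your proposed repair does not close. Paley--Zygmund only certifies a constant fraction $c=\tilde\gamma/(4T)<1$ of rich pairs, so the complementary set $Q(X_0)$ of poor pairs can have size up to $(1-c)n^2$, and nothing prevents it from being a dense, $(\eps,\delta_0)$-lower-regular bipartite graph. In that case Lemma~\ref{lemma:restricted_subgraph} is silent, and most lower-regular $G_{ab}$'s will simply place the expected $\approx(1-c)m$ edges inside $Q(X_0)$ — there is no contradiction to derive. Your suggested fix (choosing the inductive-call function $\tilde\delta$ ``aggressively'' so that a dense $Q(X_0)$ is forced to be irregular) does not work: $\tilde\delta$ constrains only the dense graph $D$ passed into the induction hypothesis, and the IH delivers only a \emph{total} count of copies of $H-ab$ in $D\cup(G_0\setminus X_0)$. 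Neither says anything about how the rich pairs are distributed inside $V_a\times V_b$, so there is no mechanism to conjure a rectangle on which $Q(X_0)$ is atypically sparse. (A second, smaller, slip: the base case $e(H')=0$ is not vacuous — $\cG(H',n,m,\eps,\lambda)$ is the singleton $\{\emptyset\}$, not $\emptyset$ — and the content of the base case is precisely the fact that a dense, lower-regular $D$ already contains $\Omega(n^{v(H)})$ copies of $H$; your induction needs this seed.)

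The paper's proof is built around resolving exactly this dense-and-regular-$Q$ case, and does so by replacing your single exposure of the $(H'-ab)$-part by a multi-round exposure. It splits the $H'$-edges of $G$ into $2z$ chunks $G_1,\dots,G_{2z}$ of $m'=m/(2z)$ edges each, maintains across rounds the shrinking set $\cP(\hat G)$ of pairs in $V_a\times V_b$ with few copies of $H\setminus h$ on them, and dichotomises (Claim~\ref{claim:main_counting_claim}) on whether $Q=\cP(\hat G)$ is sparse/irregular. In the sparse/irregular case your Lemma~\ref{lemma:restricted_subgraph} argument applies and already yields copies of $H$ (property \ref{prop:A}). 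In the opposite case — the one your one-shot argument cannot handle — the paper \emph{absorbs $Q$ into the dense side}, forming $D'=D\cup Q\in\cG(H\setminus H'',n,\eps,\delta')$, and applies the IH for $H''$ together with Lemma~\ref{lemma:deletion_graph} and Lemma~\ref{lemma:second_moment} to this new $D'$, concluding that at least $n^2/z$ of the pairs \emph{inside} $Q$ become rich (property \ref{prop:B}). Since $|\cP|\le n^2$, at most $z$ rounds can be of the second type, so some advancing round must produce the copies. Two further devices make the iteration quantitatively sound: only $z$ out of $2z$ rounds are required to advance (avoiding the circular dependence $z\to\zeta\to\beta\to z$ that a ``must-advance-every-round'' scheme would create), and the sequence-to-graph conversion of Lemma~\ref{lemma:edge_subgraph} turns the count of bad sequences into a count of bad graphs. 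Your decomposition into $G_0\cup G_{ab}$ and the use of Lemmas~\ref{lemma:deletion_graph}--\ref{lemma:second_moment} are the right ingredients, but without the iterated exposure and the $D\mapsto D\cup Q$ absorption step, the argument cannot get past the dense-regular poor set, and this is not a technical obstacle that a cleverer choice of parameters can remove.
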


As $\Hnm$ belongs to $\cG(H', n, m, \eps, \lambda)$ with high probability, for any given $\eps, \lambda > 0$, the upper bound on the number of excluded graphs in Theorem \ref{thm:main} implies the upper bound on the corresponding conditional probability akin to the one in Theorem \ref{thm:KLR}. Therefore, taking $H = H'$ results in a robust version of Theorem \ref{thm:KLR}.

Let us motivate the role of the dense graph $D$. Let $H$ be a graph, choose an edge $h = ab \in H$ and let $H' = H \setminus h$. Suppose that Theorem \ref{thm:main} holds for $H$ and $H'$. We show that it then implies the K\L R conjecture for $H$. 

First thing to have in mind is that we will `reveal' edges of $G \in \cG(H, n, m, \eps, \lambda)$ as a sequence of graphs $G_1, \ldots, G_{2z}, G^*$, for some constant $z$, where each $G_i \in \cG(H', n, m', \eps, \lambda/2)$ for $m' = m/2z$ and $G^*$ is the remaining part between $V_a$ and $V_b$. Start with $D$ being the complete bipartite graph between $V_a$ and $V_b$ and repeat the following for $i \in \{1, \ldots, 2z\}$ sequentially: take a graph $G_i$ and remove from $D$ all the edges which together with $G_i$ span a copy of $H$. As long as $D$ is not too (locally) sparse, Theorem \ref{thm:main} and the machinery from Section \ref{sec:H_distribution} (hence the need for the set $X$) tell us that all but $\beta^{m'} \binom{n^2}{m'}^{e(H')}$ graphs $G_i$ are such that $D \cup G_i$ covers at least $\zeta n^2$ edges in $D$ with copies of $H$. Therefore, for $z = \lceil 1 / \zeta \rceil$, all but at most $(2\beta)^{zm'} \binom{n^2}{m'}^{2ze(H')}$ sequences $(G_1, \ldots, G_{2z})$ are such that in some $z$ (out of $2z$) steps we see a significant decrease in the number of edges in $D$. 

Before proceeding further, let us make an important remark: It is crucial here that we do not require that there is a significant decrease in every step, in which case we would need $z$ rounds, as that would give us only $\beta^{m/z}$. To upper bound this by $\hat \beta^m$ for a given constant $\hat \beta > 0$, we would need to impose a circular dependency: $z = \lceil 1/\zeta \rceil$, $\zeta = \zeta(\beta)$ and $\beta = \beta(z, \hat \beta)$.

Once we have a sparse graph $D$ it is very unlikely (Lemma \ref{lemma:restricted_subgraph}) that an $(\eps, \lambda)$-lower-regular graph $G^*$ with $m$ edges between $V_a$ and $V_b$ is contained within $D$. Any edge $G^* \setminus D$ closes a copy of $H$ with $G_1 \cup \ldots \cup G_{2z}$, hence we see that there are at most $(2\beta)^{m/2} \binom{n^2}{m'}^{2ze(H)}$ sequences $(G_1, \ldots, G_{2z}, G^*)$ such that $G_1 \cup \ldots \cup G_t \cup G^*$ is $H$-free. To go from the number of $H$-free sequences to the number of $H$-free graphs, it suffices to observe that any $G \in \cG(H, n, m, \eps, \lambda)$ which is $H$-free gives rise to many unique $H$-free sequences, which follows immediately from a straightforward modification of \cite[Lemma 4.1]{gerke_steger_2005}:

\begin{lemma} \label{lemma:edge_subgraph}
For all $0 < \eps < 1/6$, there exists a constant $C$ such that the following holds. Let $G \in \cG_m(H, n, m, \eps, \lambda)$, for some graph $H$, and consider the graph $G' \subseteq G$ formed by taking $m' > Cn$ edges from each $G(V_i, V_j)$, for $ij \in H$, uniformly at random. Then, with high probability $G' \in \cG(H, n, m', \eps, \lambda/2)$.
\end{lemma}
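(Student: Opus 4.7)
The plan is to reduce the statement to a single bipartite pair, and then to combine a Chernoff bound for the hypergeometric distribution with a union bound over all candidate subset pairs.

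First I would observe that the random choices made on the different slots $G(V_i,V_j)$, $ij \in H$, are independent, so by a union bound over the (constantly many) edges of $H$ it suffices to prove the following single-pair statement: if $F$ is an $(\eps,\lambda)$-lower-regular bipartite graph on $V_i \cup V_j$ with $|V_i|=|V_j|=n$ and $m$ edges, and $F'$ is obtained by sampling $m' > Cn$ edges of $F$ uniformly at random, then $F'$ is $(\eps,\lambda/2)$-lower-regular with probability $1-o(1)$. By construction $F'$ has exactly $m'$ edges between $V_i$ and $V_j$, so $d_{F'}(V_i,V_j)=m'/n^2$, and the condition to check reduces to
$$
e_{F'}(A',B') \ \ge\ (\lambda/2)(m'/n^2)\,|A'||B'|
$$
for every $A' \subseteq V_i$, $B'\subseteq V_j$ with $|A'|,|B'| \ge \eps n$.

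For any such fixed pair, the lower-regularity of $F$ gives $e_F(A',B') \ge \lambda(m/n^2)|A'||B'|$, so $X := e_{F'}(A',B')$ is hypergeometric with mean
$$
\mu \ =\ m'\, e_F(A',B')/m \ \ge\ \lambda m' |A'||B'|/n^2 \ \ge\ \lambda m' \eps^2.
$$
The standard Chernoff bound for the hypergeometric distribution then yields $\Pr[X < \mu/2] \le \exp(-\mu/8) \le \exp(-\lambda m'\eps^2/8)$, which is exactly what is needed for this pair. A union bound over the at most $4^n$ choices of $(A',B')$ (and the $e(H)$ slots) gives total failure probability at most $e(H)\cdot 4^n \exp(-\lambda m'\eps^2/8)$, which is $o(1)$ as soon as $C$ is chosen large enough in terms of $\eps$ and $\lambda$ (for instance, any $C$ with $\lambda C\eps^2/8 > 2\ln 4$ will do).

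The only ``obstacle'' here is bookkeeping: the Chernoff deviation has to beat the entropy cost of the union bound over exponentially many subset pairs, which is what forces $C$ to depend on $\eps$ and $\lambda$. Since both are fixed constants in any application of the lemma, this is harmless. The one mild care point is to invoke a Chernoff-style tail bound valid for sampling without replacement (equivalently, for the hypergeometric distribution) rather than appealing to independent Bernoullis; such a bound is standard and sharp enough for the computation above.
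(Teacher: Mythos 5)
Your proof is correct; the paper itself gives no direct proof of this lemma but defers to [Lemma 4.1, gerke\_steger\_2005], and the argument you give (reduce to a single $V_i V_j$-pair by independence of the samples, observe the edge count is hypergeometric with mean at least $\lambda m'\eps^2$, apply the hypergeometric Chernoff bound, and union-bound over subset pairs) is essentially what that cited lemma's proof does. A minor optional sharpening: it suffices to check the density condition for sets of size exactly $\lceil\eps n\rceil$, since the minimum density over sets of size at least $t$ is always attained by a set of size exactly $t$ (take the $t$ vertices of lowest degree into the other side), which would cut your union bound from $4^n$ to $\binom{n}{\lceil\eps n\rceil}^2$, but as the lemma places no demand on how $C$ depends on $\eps$ and $\lambda$ the coarser $4^n$ bound is perfectly adequate.
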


The proof of Theorem \ref{thm:main} proceeds in a similar way, by iteratively replacing a sparse pair with a dense one, eventually arriving at the point where the whole configuration is composed of dense pairs -- in which case the statement deterministically holds. Theorem \ref{thm:main} and the argument used to prove it, at least at a high level, are largely based on \cite[Theorem 2.1]{conlon2014klr} and the multiple-exposure ideas used there. These ideas, in turn, closely follow the method of Schacht \cite{schacht2016extremal} and its subsequent modification by Samotij \cite{samotij2014stability} which can be further traced to the seminal paper by R\"odl and Ruci\'nski \cite{rodl1995threshold}. The implementation is somewhat different in our case and, due to proving the claim with a superexponentially small probability of failure, we believe some parts of the argument even become easier.

\begin{proof}[Proof of Theorem \ref{thm:main}]
Consider some graph $H$ and a subgraph $H' \subseteq H$. If $H'$ is an empty graph then $D \in \cG(H, n, \eps, \delta)$ and, as $d_D(V_i, V_j) > d$ for $ij \in H$, it is well known (and easy to prove) that every such $D$ contains at least $(d\cdot \delta(\eps))^{e(H)} n^{v(H)}$ canonical copies of $H$, for sufficiently small $\eps = \eps(H, d, \delta)$. Suppose that the statement holds for all strict subgraphs $H'' \subset H'$ (having the role of $H'$). We show that then it also holds for $H'$.

We start by defining the constants. Let $\delta_0$ be a function as defined in Lemma \ref{lemma:restricted_subgraph}, and set
$$
    d_0 = \delta_0(1), \quad \beta_1 = \beta 3^{-e(H')}, \quad \beta_2 = \beta_1^2 2^{-4e(H')}.
$$
We will apply the induction hypothesis with $H''$ (as $H'$), $\lambda/2$ (as $\lambda$), $\beta_2 = \beta_1^2 2^{-4e(H')}$ (as $\beta$), $\min\{d, d_0\}$ (as $d$), and $\delta'(x) = \min\{\delta(x), \delta_0(x)\}$ (as $\delta$). Let $\eps, \xi'', \gamma'' > 0$ be constants corresponding to these parameters. We also invoke Lemma \ref{lemma:deletion_graph} with $H''$ (as $H'$), $\beta_2$ (as $\beta$) and $\xi' = \min\{\xi''/2, \lambda \eps^2 / 4\}$ (as $\xi$), and let $T$ be the corresponding constant. Finally, set
$$
    z = \lceil 4T / (\gamma'')^2 \rceil, \quad \xi = \xi' / (2z), \quad \gamma' = \gamma'' \lambda \eps^2 / 8, \quad \gamma = \gamma' / (2z)^{e(H')}.
$$

Choose an edge $h = ab \in H'$ and let $H'' = H' \setminus h$ be the graph obtained from $H'$ by removing $h$. Importantly, $H''$, $H'$ and $H$ are all on the same vertex set, even if some of them might have isolated vertice. We prove the statement for $m \ge Cn^{2-1/m_2(H)}$ divisible by $2z$, which is easily seen to imply the theorem for any other sufficiently large $m$. To this end, let 
$$
    m' = m / (2z), \quad \mu_e = \frac{n^{v(H)} (m'/n^2)^{e(H'')}}{n^2}, \quad \mu = n^{v(H)} (m'/n^2)^{e(H')} = \mu_e \cdot m',
$$
and note that
$$
    \gamma n^{v(H)} (n/m^2)^{e(H')} = \gamma' \mu.
$$
Observe that $\mu$ is the expected number of copies of $H'$ in $\Hpnmm$, and $\mu_e$ is the expected number of copies of $H''$ in $\Hppnmm$ sitting on a particular pair of vertices in $V_a \times V_b$. Consequently, these values upper bound the expected number of such copies of $H$ in $\Hpnmm \cup D$, and $H \setminus h$ in $\Hppnmm \cup D$.

Given a graph $G \in \cG(H', n, m', \eps, \lambda/2)$, define the set $\cP(G)$ of `poor' pairs of vertices as follows (see Definition \ref{def:deg_H}):
$$
    \cP(G) = \left\{ e \in V_a \times V_b \colon \deg_{H \setminus h}(e, D \cup G^-) < \gamma'' \mu_e / 2 \right\},
$$
where $G^-$ is the graph obtained from $G$ by removing all the edges between $V_a$ and $V_b$. Note that we could have defined $\cP(G)$ with $G = G^-$ instead, however we want to emphasise that we  ignore the edges of $G$ between $V_a$ and $V_b$.

% As $G^- \in \cG(H'', n, m', \eps, \lambda/2)$ and we have assumed that the theorem holds for $H''$ (as $H'$), our hope is that the copies of $H \setminus h$ in $D \cup G^-$ are sufficiently well distributed, that is, a significant number of pairs of vertices $(v_a, v_b)$ belongs to $\Omega(\mu_e)$ copies of $H \setminus h$. The `poor' pairs of vertices are those that lie in significantly less copies.

The following definition encompasses the `advancing' argument used in the proof.

\textbf{Advancing property $\cA(\hat G)$.} Given a graph $\hat G \subseteq K_n^{H'}$, we say that a graph $G \in \cG(H', n, m', \eps, \lambda/2)$ is \emph{advancing} with respect to $\hat G$ if, for every $X \subseteq E(G)$ of size $|X| \le \xi' m'$, at least one of the following holds:
\begin{enumerate}[(A)]
    \item \label{prop:A} $D \cup \hat G \cup (G \setminus X)$ contains at least $\gamma' \mu$ canonical copies of $H$;
    \item \label{prop:B} $|\cP(\hat G \cup (G \setminus X ))| < |\cP(\hat G)| - n^2 / z$.
\end{enumerate}

\begin{claim} \label{claim:main_counting_claim}
For any $\hat G \subseteq K_n^{H'}$, all but at most $\beta_2^{m'}\binom{n^2}{m'}^{e(H')}$
graphs $G \in \cG(H', n, m', \eps, \lambda/2)$ have the property $\cA(\hat G)$.
\end{claim}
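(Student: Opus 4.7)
$G \in \cG(H', n, m', \eps, \lambda/2)$ fails $\cA(\hat G)$ precisely when there is a witness $X \subseteq E(G)$ with $|X| \le \xi' m'$ for which both (A) and (B) fail, so the task is to count such $G$. My plan is to split on the structure of $\cP(\hat G)$, which is fully determined once $\hat G$ and $D$ are fixed. Either $\cP(\hat G)$ has fewer than $d_0 n^2$ pairs or fails to be $(\eps, \delta_0)$-lower-regular --- call this Regime A --- or $|\cP(\hat G)| \ge d_0 n^2$ and $\cP(\hat G)$ is $(\eps, \delta_0)$-lower-regular --- Regime B. In Regime A I control the bipartite piece $\tilde G := G(V_a, V_b)$, while in Regime B I control $G^- \in \cG(H'', n, m', \eps, \lambda/2)$.

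In Regime A, I apply Lemma \ref{lemma:restricted_subgraph} with $Q = \cP(\hat G)$, lower-regularity $\lambda/2$ and probability $\beta_2$ (or a slightly smaller constant). All but $\beta_2^{m'}\binom{n^2}{m'}$ choices of $\tilde G$ satisfy $|\tilde G \setminus \cP(\hat G)|$ of order $\lambda \eps^2 m'$, and for any admissible $X$ at least $\Omega(\lambda \eps^2 m')$ edges of $\tilde G \setminus X_{a,b}$ land at pairs outside $\cP(\hat G)$. By definition of $\cP(\hat G)^c$, each such edge $e$ contributes $\deg_{H \setminus h}(e, D \cup \hat G^-) \ge \gamma'' \mu_e/2$ extensions, and adding $G^- \setminus X^-$ only increases this. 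Summing yields at least $\gamma'\mu$ canonical copies of $H$ in $D \cup \hat G \cup (G \setminus X)$, so (A) holds; multiplying by the trivial bound $\binom{n^2}{m'}^{e(H'')}$ on the number of choices for $G^-$ gives the claimed $\beta_2^{m'}\binom{n^2}{m'}^{e(H')}$ cap in this regime.

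In Regime B, the graph $D'' := D \cup \cP(\hat G)$ belongs to $\cG(H \setminus H'', n, \eps, \delta')$ with every pair-density $\ge \min\{d, d_0\}$, so it is a legal input to the induction hypothesis for $H''$. I combine the induction with Lemma \ref{lemma:deletion_graph}, invoked for $H \supset H''$, the pair $ab$, parameter $\beta_2$, and threshold $\xi'$. All but $2\beta_2^{m'}\binom{n^2}{m'}^{e(H'')}$ graphs $G^-$ satisfy both: (i) for every $X' \subseteq E(G^-)$ with $|X'| \le \xi'' m'$, $D \cup \cP(\hat G) \cup (G^- \setminus X')$ contains at least $\gamma'' n^{v(H)}(m'/n^2)^{e(H'')} = \gamma''\mu_e n^2$ canonical copies of $H$, and (ii) there exists $Y^- \subseteq E(G^-)$ with $|Y^-| \le \xi' m'$ realising $\frac{1}{n^2}\sum_e \deg_{H''}(e, G^- \setminus Y^-)^2 < T \mu_e^2$. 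Given any $X^-$ with $|X^-| \le \xi' m'$, I set $X' := X^- \cup Y^-$; this satisfies $|X'| \le 2\xi' m' \le \xi'' m'$ by the choice $\xi' \le \xi''/2$, so (i) applies and can be rewritten as $\sum_{e \in \cP(\hat G)} \deg_{H \setminus h}(e, D \cup (G^- \setminus X')) \ge \gamma'' \mu_e n^2$. Because $\deg_{H \setminus h}(e, D \cup (G^- \setminus X')) \le \deg_{H''}(e, G^- \setminus X') \le \deg_{H''}(e, G^- \setminus Y^-)$, the second-moment bound from (ii) transfers to $\deg_{H \setminus h}(\cdot, D \cup (G^- \setminus X'))$. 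A Paley--Zygmund calculation (as in Lemma \ref{lemma:second_moment}) applied to $Y := \deg_{H \setminus h}(e, D \cup (G^- \setminus X'))$ with $e$ uniform in $\cP(\hat G)$ then produces at least $(\gamma'')^2 n^2 / (4T) \ge n^2/z$ pairs $e \in \cP(\hat G)$ with $Y \ge \gamma''\mu_e/2$; including $\hat G^-$ only raises $\deg_{H \setminus h}$, so these pairs exit $\cP(\hat G \cup (G \setminus X))$, giving (B). Multiplying by $\binom{n^2}{m'}$ for the free choice of $\tilde G$ yields the $2\beta_2^{m'}\binom{n^2}{m'}^{e(H')}$ cap in Regime B. Summing the two regimes and absorbing the small constant factor into the choice of $\beta_2$ gives the claim.

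The main obstacle, I expect, is arranging the Regime B step so that the pairs produced by Paley--Zygmund actually lie \emph{inside} $\cP(\hat G)$ (which is what (B) requires), rather than merely inside $V_a \times V_b$. This is exactly what the choice $D'' = D \cup \cP(\hat G)$ enforces --- the induction only counts copies of $H$ whose $h$-edge is a pair of $\cP(\hat G)$ --- and it in turn is what forces the dichotomy on whether $\cP(\hat G)$ is itself dense and $(\eps, \delta_0)$-lower-regular.
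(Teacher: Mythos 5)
Your proposal is correct and follows essentially the same two-case argument as the paper: when $\cP(\hat G)$ is sparse or not lower-regular, Lemma \ref{lemma:restricted_subgraph} forces most $G(V_a,V_b)$ to have many edges outside $\cP(\hat G)$, each completing $\gamma''\mu_e/2$ copies of $H\setminus h$ and giving \ref{prop:A}; otherwise $D' = D \cup \cP(\hat G)$ is a valid input to the inductive hypothesis and Lemma \ref{lemma:deletion_graph}, and Paley--Zygmund (Lemma \ref{lemma:second_moment}) produces $\ge n^2/z$ pairs of $\cP(\hat G)$ that exit $\cP(\hat G \cup (G\setminus X))$, giving \ref{prop:B}. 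The only cosmetic difference is your factor $2\beta_2^{m'}$, which you correctly note is absorbable; the paper simply treats it implicitly.
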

We postpone the proof of the claim until the end.

We now come to the key concept in our proof. Given a subset $S \subseteq \{1, \ldots, 2z\}$, a sequence of graphs $\mathbf{G} = (G_1, \ldots, G_{2z})$, and a sequence of subsets $\mathbf{X} = (X_{i})_{i \in S}$, where for each $i \in S$ we have $X_{i} \subseteq E(G_{i})$ and $|X_{i}| \le \xi' m'$, we say that $\mathbf{G}$ is \emph{$(S, \mathbf{X})$-bad} if, for each $i \in [2z] \setminus S$, $G_i \notin \cA(\hat G_{i-1})$ where
\begin{equation} \label{eq:G_hat}
    \hat G_i = \bigcup_{\substack{j \in S\\j\le i}} G_j \setminus X_j.
\end{equation}
We say that $\mathbf{G}$ is \emph{bad} if it is $(S, \mathbf{X})$-bad for some choice of $S$ and $\mathbf{X}$ with $|S| \le z$.

Let us demonstrate the usefulness of this definition. Consider some $G \in \cG(H', n, m, \eps, \lambda)$. By Lemma \ref{lemma:edge_subgraph}, we can partition $G$ into edge-disjoint graphs $G_1, \ldots, G_{2z} \in \cG(H', n, m', \eps, \lambda/2)$. We claim that if $\mathbf{G} = (G_1, \ldots, G_{2z})$ is not bad, then $D \cup (G \setminus X)$ contains at least $\gamma \mu$ canonical copies of $H$, for any $X \subseteq E(G)$ of size $|X| \le \xi m$. Consider some $X \subseteq E(G)$ of size $|X| \le \xi m = \xi' m'$. Set $S = \emptyset$ and, sequentially, for each $i \in \{1, \ldots, 2z\}$ do the following: if $G_i \in \cA(\hat G_{i-1})$, where $\hat G_i$ is as given by \eqref{eq:G_hat} with respect to the current set $S$, add $i$ to $S$ and set $X_i = X \cap E(G_i)$. As we have assumed that $\mathbf{G}$ is not bad, we have $|S| > z$. Consider only the first $z+1$ elements of $S$, say $s_1, \ldots, s_{z+1}$. If $D \cup \hat G_{s_{z+1}}$ contains less than $\gamma \mu$ copies of $H$ then, by \ref{prop:B}, we have
$$
    |\cP(\hat G_{s_{i+1}})| < |\cP(\hat G_{s_i})| - n^2 / z
$$
for every $i \in \{1, \ldots, z\}$. In particular, $|\cP(\hat G_{s_{z+1}})| < 0$, which cannot be. Therefore, $D \cup \hat G_{s_{z+1}}$ and, consequently, $D \cup (G \setminus X)$, contain the desired number of canonical copies of $H$.

We now count the number of bad sequences with each $G_i \in \cG(H', n, m', \eps, \lambda/2)$. First, we can choose $S \subseteq \{1, \ldots, 2z\}$ of size $|S| \le z$ in at most $2^{2z}$ ways. For a fixed choice of $S$, say $S = \{s_1, \ldots, s_{z'}\}$ for some $z' \le z$, there are at most $\binom{n^2}{m'}^{e(H') z'}$ ways to choose $G_{s_1}, \ldots, G_{s_{z'}}$. For each such choice, we can choose $\mathbf{X}$ in at most $2^{m e(H')}$ ways. For each $i \notin S$, by Claim \ref{claim:main_counting_claim}, there are at most $\beta_2^{m'} \binom{n^2}{m'}^{e(H')}$ choices for $G_i \notin \cA(\hat G_{i-1})$. All together, this gives at most
\begin{equation}
    \label{eq:bad_seq_count}
    2^{2z} \cdot 2^{m e(H')}  \cdot \binom{n^2}{m'}^{e(H') z'} \cdot \left( \beta_2^{m'} \binom{n^2}{m'}^{e(H')} \right)^{2z - z'} <
    2^{2m e(H')} \beta_2^{m/2} \cdot \binom{n^2}{m'}^{e(H') \cdot 2z} = \beta_1^m \binom{n^2}{m'}^{e(H') \cdot 2z}
\end{equation}
such bad sequences.

Finally, we use \eqref{eq:bad_seq_count} to upper bound the number of graphs $G \in \cG(H', n, m, \eps, \lambda)$ which do not satisfy the property of the theorem. Note that Lemma \ref{lemma:edge_subgraph} implies that there are at least
\begin{equation} \label{eq:G_into_seq}
    0.99 \cdot \left( \frac{m!}{(m'!)^{2z}} \right)^{e(H')}
\end{equation}
different ways to partition $G$ into a sequence $G_1, \ldots, G_{2z} \in \cG(H', n, m', \eps, \lambda/2)$. If $G$ does not satisfy the property of Theorem \ref{thm:main} then each of these sequences is bad, and two different graphs $G$ and $G'$ give different families of sequences. Therefore, combining \eqref{eq:bad_seq_count} and \eqref{eq:G_into_seq} we get the desired upper bound on the number of `bad' graphs $G \in \cG(H', n, m, \eps, \lambda)$:
$$
    \frac{\beta_1^m \binom{n^2}{m'}^{e(H') \cdot 2z}}{0.99 \cdot \left( \frac{m!}{(m'!)^{2z}} \right)^{e(H')}} < 2 \beta_1^m \left( \frac{(n^2)^{m}}{m!} \right)^{e(H')} 
    \le \beta_1^m 2^{m e(H') + 1} \binom{n^2}{m}^{e(H')} < \beta^m \binom{n^2}{m}^{e(H')}.
$$
In the penultimate inequality we assumed $m < n^2/2$. If this is not the case, then we are well within the dense regime and the theorem is both known and easy to prove.

It remains to prove Claim \ref{claim:main_counting_claim}.

\begin{proof}[Proof of Claim \ref{claim:main_counting_claim}]
Let $Q$ stand for the bipartite graph between $V_a$ and $V_b$, with the edges being pairs from $\cP(\hat G)$. Following the proof of \cite[Claim 2.6]{conlon2014klr}, we distinguish two cases. 

\textbf{Case 1. $e(Q) \le d_0 n^2$ or $Q$ is not $(\eps, \delta_0)$-lower-regular.} 

Let $G \in \cG(H', n, m', \eps, \lambda/2)$ be the graph such that $|G(V_a, V_b) \setminus Q| \ge \lambda \eps^2 m' / 2$. By Lemma \ref{lemma:restricted_subgraph}, all but at most $\beta_2^{m'} \binom{n^2}{m'}^{e(H')}$ graphs have such a property. For any $X \subseteq G$ of size $|X| \le \xi' m'$, we have $|G(V_a, V_b) \setminus (Q \cup X)| \ge \lambda \eps^2 m' / 4$ and, as each pair of vertices corresponding to edges $G(V_a, V_b) \setminus Q$ is contained in at least $\gamma'' \mu_e / 2$ copies of $H \setminus h$ in $D \cup \hat G^-$, this gives
$$
    (\gamma'' \mu_e / 2) \cdot (\lambda \eps^2 m' / 4) = \gamma' \mu
$$
copies of $H$ in $D \cup \hat G \cup (G \setminus X)$. Therefore, the part \ref{prop:A} of the property $\cA(\hat G)$ holds.

\textbf{Case 2. $e(Q) > d_0 n^2$ and $Q$ is $(\eps, \delta_0)$-lower-regular.} 

Let $D' \in \cG(H \setminus H'', n, \eps, \delta')$ be a graph obtained from $D$ by adding all the edges from $Q$. By the induction hypothesis and Lemma \ref{lemma:deletion_graph} (with $H''$ as $H$), all but at most $\beta_2^{m'} \binom{n^2}{m'}^{e(H'')}$ graphs $G' \in \cG(H'', n, m', \eps, \lambda/2)$ have the following two properties:
\begin{enumerate}[(i)]
    \item \label{prop:robust_copies} for every $X \subseteq E(G')$ of size $|X| < 2 \xi' m'$, $D' \cup (G' \setminus X)$ contains at least $\gamma'' \mu_e n^2$ canonical copies of $H$;
    \item \label{prop:bound_second_moment} there exists $X' \subseteq E(G')$ of size $|X'| \le \xi' m'$ such that
    \begin{equation} \label{eq:bound_second_moment}
        \frac{1}{n^2} \sum_{e \in V_a \times V_b} \deg_{H''}(e, D' \cup (G' \setminus X'))^2 < T \mu_e^2.
    \end{equation}
\end{enumerate}
Let $G' \in \cG(H'', n, m', \eps, \lambda/2)$ be a graph which satisfies these two properties, and let $X' \subseteq E(G')$ be a set of size $|X'| \le \xi' m'$ as given by \ref{prop:bound_second_moment}. Then, for any $X \subseteq E(G')$ of size $|X| \le \xi m = \xi' m'$, Lemma \ref{lemma:second_moment} with $D' \cup (G' \setminus (X \cup X'))$ (as $G$), \ref{prop:robust_copies} and \eqref{eq:bound_second_moment},  tells us that at least $n^2 / z$ edges in $Q$ belong to at least $\gamma'' \mu_e / 2$ copies of $H$. Regardless of how we extend $G'$ to a graph $G \in \cG(H', n, m', \eps, \lambda/2)$, none of these edges belong to $\cP(\hat G \cup (G \setminus X))$ hence the part \ref{prop:B} holds.
\end{proof}

This finishes the proof of Theorem \ref{thm:main}.
\end{proof}

It is worth noting that the fact we need to remove a subset of edges such that \eqref{eq:bound_second_moment} holds is the sole reason why we need the set $X$ in Theorem \ref{thm:main}.

%!TEX root = main.tex
\section{Concluding remarks}
\label{sec:concluding}

The approach used to prove the K\L R conjecture can also be applied in other contexts. As an example, in a forthcoming companion note \cite{nenadov2021szemeredisparse} to this paper we give a short proof of a theorem of Balogh, Morris, and Samotij \cite{balogh2015independent} which states that a randomly chosen $m$-element subset of $[n]$ fails to contain a $k$-term arithmetic progression with probability at most $\beta^m$ for any constant $\beta > 0$ provided $m \ge Cn^{1 - 1/(k-1)}$ where $C = C(\beta)$ is sufficiently large.

In general, the proof strategy requires three ingredients: (i) the deterministic statement holds for structures of arbitrarily small density; (ii) a given structure can be partitioned in many different ways into smaller substructures which are captured by the inductive statement; (iii) an upper bound on the second-moment akin to the one in \eqref{eq:deletion} can be achieved by potentially removing a small subset of elements. While it is possible to wrap up these conditions in a general theorem, it is not clear that such a theorem would match the wide spectrum of applications of the hypergraph containers or other related  transference results \cite{conlon2014extremal,schacht2016extremal}. Thus the value lies in the method itself and, if nothing else, its applications to specific examples which uncover transparent and intuitive proofs.

\textbf{Acknowledgment.} The author would like to thank Angelika Steger for many stimulating discussions about the K\L R conjecture, and to Wojciech Samotij for helpful comments on the manuscript.

{\small \bibliographystyle{abbrv} \bibliography{references}}

\newpage
\appendix 
\section{Proof of Lemma \ref{lemma:deletion_graph}}
\label{sec:rodl_rucinski_deletion_appendix}

The following lemma is an analogue of \cite[Lemma 4]{rodl1995threshold}.

\begin{lemma} \label{lemma:deletion_general}
    Let $Y$ be a set with $N$ elements, and $\cS$ be a family of $s$-element subsets of $Y$, for some integers $s$ and $N$. Let $k < N/4$ be an integer and $T > 1$. Then, for all but at most 
    $$
        T^{-k/s} \binom{N}{m}
    $$
    subsets $Y_m \subseteq Y$ of size $m$, there exists a subset $X \subseteq Y_m$ of size $|X| \le k$ such that $Y_m \setminus X$ contains at most
    $$
        2^s T |\cS| (m/N)^{s}
    $$
    sets from $\cS$.
\end{lemma}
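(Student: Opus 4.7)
The plan is a streamlined double-counting implementation of the R\"odl--Ruci\'nski deletion argument. Call a subset $Y_m \subseteq Y$ of size $m$ \emph{bad} if it violates the conclusion of the lemma, that is, for every $X \subseteq Y_m$ with $|X| \le k$ the number of sets from $\cS$ contained in $Y_m \setminus X$ strictly exceeds $B := 2^s T |\cS| (m/N)^s$. Set $\ell := \lfloor k/s \rfloor + 1$, so that $\ell \ge k/s$ and $(\ell - 1)s \le k$. The goal is to show that the number of bad $Y_m$ is at most $T^{-k/s} \binom{N}{m}$.

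The first step will extract structure from an arbitrary bad $Y_m$ via a greedy procedure. Having chosen pairwise disjoint $S_1, \ldots, S_{t-1} \subseteq Y_m$ from $\cS$ for some $t \le \ell$, set $X = \bigcup_{j<t} S_j$; since $|X| \le (\ell-1)s \le k$, the badness of $Y_m$ supplies more than $B$ sets from $\cS$ contained in $Y_m \setminus X$, any one of which may serve as $S_t$ and is automatically disjoint from the previously chosen ones. Multiplying the number of admissible choices across all $\ell$ steps shows that a bad $Y_m$ contains strictly more than $B^\ell$ ordered $\ell$-tuples of pairwise disjoint members of $\cS$.

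The second step is a straightforward double count. Letting $Z(Y_m)$ denote the number of ordered pairwise disjoint $\ell$-tuples of sets from $\cS$ contained in $Y_m$, swapping the order of summation gives
$$
    \sum_{Y_m} Z(Y_m) \le |\cS|^\ell \binom{N - \ell s}{m - \ell s},
$$
since there are at most $|\cS|^\ell$ such ordered tuples in total, each lying in exactly $\binom{N - \ell s}{m - \ell s}$ size-$m$ supersets $Y_m$. Combining this with the lower bound $Z(Y_m) > B^\ell$ from step one,
$$
    |\{ Y_m \text{ bad} \}| < \frac{|\cS|^\ell}{B^\ell} \binom{N - \ell s}{m - \ell s}.
$$
The elementary ratio bound $\binom{N - \ell s}{m - \ell s}/\binom{N}{m} \le (m/N)^{\ell s}$ together with the definition of $B$ reduces the right-hand side to $(2^s T)^{-\ell} \binom{N}{m}$, which is at most $T^{-k/s} \binom{N}{m}$ because $\ell \ge k/s$ and $T > 1$.

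The main -- and essentially only -- obstacle is step one: at every round of the greedy construction the cumulative deletion $\bigcup_{j<t} S_j$ must still have size at most $k$ so that the badness hypothesis may be invoked to deliver a fresh pool of more than $B$ admissible choices for $S_t$. This is exactly why $\ell$ is chosen to be $\lfloor k/s \rfloor + 1$ and why getting a lower bound of the form $B^\ell$ (rather than a mere $\ge 1$) requires no extra work. The mild assumption $k < N/4$ plays no further role beyond guaranteeing $\ell s < N/2$, so that the binomial expressions and the ratio estimate above are well defined.
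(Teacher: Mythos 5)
Your proposal is correct and follows essentially the same route as the paper: a greedy extraction showing that every bad $Y_m$ contains more than $B^\ell$ ordered disjoint $\ell$-tuples from $\cS$, combined with a double count of such tuples over all $m$-subsets. The paper phrases the second step probabilistically (expectation of $Z$ plus Markov's inequality, with $t=\lceil k/s\rceil$ rather than your $\lfloor k/s\rfloor+1$), but this is the same averaging argument in different clothing, and your version in fact gives a marginally cleaner constant since you use the exact ratio $(m/N)^{\ell s}$ rather than the paper's cruder $(2m/N)^{ts}$.
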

\begin{proof}
Consider some $Y_m \subseteq Y$ of size $|Y_m| = m$. Following the proof of \cite[Lemma 4]{rodl1995threshold}, let $Z$ be the number of $t$-element sequences $\mathbf{S} = (S_1, \ldots, S_t)$ of disjoint sets from $\cS$ which are contained in $Y_m$, where $t = \lceil k/s \rceil$. If $Y_m$ does not contain a desired subset $X$, then no matter how we choose the first $t' < t$ elements of $\mathbf{S}$, there are still at least $2^s T |\cS| (m/N)^s$ choices for $S_{t' + 1}$. Overall, this implies
$$
    Z \ge \Big( 2^s T |\cS| (m/N)^s \Big)^t.
$$

Suppose that we choose $Y_m$ uniformly at random among all $m$-element subsets of $Y$, and let $\cE$ be the event that a desired subset $X \subseteq Y_m$ does not exist. Then a particular sequence $\mathbf{S}$ appears in $Y_m$ with probability
$$
    \binom{N - ts}{m - ts} / \binom{N}{m} < \left( 2m / N \right)^{ts},
$$
with room to spare, thus $\mathbb{E}[Z] < \left( 2^s |\cS| (m/N)^s \right)^{t}$. By Markov's inequality, we have
$$
    \Pr[\cE] \le \Pr\left[Z \ge \Big( 2^s T |\cS| (m/N)^s \Big)^t\right] \le \frac{\mathbb{E}[Z]}{\Big( 2^s T |\cS| (m/N)^s \Big)^t} \le T^{-k/s},
$$
which implies the desired bound.
\end{proof}

\begin{proof}[Proof of Lemma \ref{lemma:deletion_graph}]
    Consider some graph $\hat H$ which is a union of two copies of $H'$, say $\hat H = H_1 \cup H_2$, such that $a$ and $b$ belong to $J = V(H_1) \cap V(H_2)$. Moreover, we require that $H_1 \cap H_2$ is an induced subgraph of $\hat H$, that is $H_1 \cap H_2 = \hat H[J]$. We identify the vertex set $J$ with the subgraph it induces. There are at most $2^{v(H)}$ such graphs $\hat H$. Note that
    $$
        \sum_{e \in V_a \times V_b} \deg_{H'}(e, G)^2
    $$
    counts the number of canonical copies of all such $\hat H$ in $G$. Here we say that $\hat H$ is canonical if the vertices corresponding to $H_1$ and $H_2$ are in the correct sets.
    
    To prove the lemma, it suffices to show that for each such graph $\hat H$, for all but at most
    $$
        (\beta/2^{v(H)})^m \binom{n^2}{m}^{e(H)}
    $$
    graphs $G \in \cG(H', n, m)$ there exists $X \subseteq E(G)$ of size $|X| \le \xi 2^{-v(H)} m$ such that $G \setminus X$ contains at most
    \begin{equation} \label{eq:T_hatH}
        T 2^{-v(H)} n^{v(\hat H)} (m/n^2)^{e(\hat H)} = T 2^{-v(H)} n^{2v(H) - v(J)} (m/n^2)^{2e(H) - e(J)}
    \end{equation}
    canonical copies of $\hat H$. Indeed, as
    \begin{equation} \label{eq:J_bound_n2}
        n^{v(J)} \left( \frac{m}{n^2} \right)^{e(J)} \ge n^2
    \end{equation}
    for $m \ge n^{2 - 1/m_2(H)}$, we can upper bound \eqref{eq:T_hatH} as
    $$
        T 2^{-v(H)} n^{2v(H) - 2} (m/n^2)^{2e(H)}.
    $$
    Summing over all graphs $\hat H$ implies the lemma. The inequality \eqref{eq:J_bound_n2} follows from the fact that $J$ is missing an edge $ab$ which is present in $H$, which implies $m_2(H) \ge e(J) / (v(J) - 2)$.
    
    Consider one such $\hat H$, and apply Lemma \ref{lemma:deletion_general} with $Y$ being the edge set of $K_n^H$, $\cS$ being he family of all canonical copies of $\hat H$ in $K_n^H$, and $k = \xi 2^{-v(H)} m$. For $T' = T2^{-v(H) - e(\hat H)}$, where $T = T(\xi, \hat H)$ is sufficiently large, this gives that all but at most
    $$
        (T')^{-\xi 2^{-v(H)} m / e(\hat H)} \binom{e(H) n^2}{e(H) m} < (\beta 2^{-v(H)})^m \binom{n^2}{m}^{e(H)}
    $$
    graphs $G \subseteq K_n^{H}$ with exactly $e(H) m$ edges have the property that $G \setminus X$ contains at most
    $$
        T' n^{v(\hat H)} (2m/n^2)^{e(\hat H)} = T 2^{-v(H)} n^{v(\hat H)} (m/n^2)^{e(\hat H)}
    $$
    copies of $\hat H$, for some $X \subseteq E(G)$ of size at most $\xi 2^{-v(H)} m$. This is trivially an upper bound on such bad graphs in $\cG(H, n, m)$, which finishes the proof.
\end{proof}

\end{document}